\documentclass{amsart}

\usepackage{amssymb, amsmath, amsthm, stmaryrd, amscd, mathrsfs}
\SetSymbolFont{stmry}{bold}{U}{stmry}{m}{n} 

\setlength{\textwidth}{125mm}

\setlength{\textheight}{185mm}


\usepackage{verbatim}
\usepackage{dsfont}
\usepackage{enumerate}

\allowdisplaybreaks

\newtheorem{Def}{Definition}[section]
\newtheorem{Lem}[Def]{Lemma}
\newtheorem{Pro}[Def]{Proposition}

\newtheorem{The}[Def]{Theorem}

\newtheorem{Rem}[Def]{Remark}

\numberwithin{equation}{section}

\renewcommand{\hat}{\widehat}

\begin{document}

\author{Gieri Simonett}
\address{\!\!\!Department of Mathematics, Vanderbilt University, Nashville, TN, USA}
\email{gieri.simonett@vanderbilt.edu}

\author{Mathias Wilke}
\address{\!\!\!Faculty of Mathematics, University of Regensburg, Germany}
\email{mathias.wilke@ur.de}


\dedicatory{Dedicated to Jan Pr\"uss on the occasion of his retirement}

\thanks{The research of the first author was partially supported by NSF DMS-1265579.}


\begin{abstract}
We investigate the Westervelt equation from nonlinear acoustics, subject to nonlinear absorbing boundary conditions of order zero, which were recently proposed in \cite{KaSh15,KaSh15.1}. We apply the concept of maximal regularity of type $L_p$ to prove global well-posedness for small initial data. Moreover, we show that the solutions regularize instantaneously which means that they are $C^\infty$ with respect to time $t$ as soon as $t>0$. Finally, we show that each equilibrium is stable and each solution which starts sufficiently close to an equilibrium converges at an exponential rate to a possibly different equilibrium.
\end{abstract}

\title[Westervelt equation with absorbing boundary conditions]{Well-posedness and long-time behavior for the Westervelt equation with absorbing boundary conditions of order zero}


\maketitle

\section{Introduction and the model}

We are concerned with the so-called Westervelt equation
\begin{align}\label{eq:Westervelt0}
  u_{tt} - c^2\Delta u - \beta \Delta u_t &= \gamma (u^2)_{tt},
\end{align}
which is used to describe the propagation of sound in fluidic media.  The function $u(t,x)$ denotes the acoustic pressure fluctuation from an ambient value at time $t$ and position $x$.  Furthermore, $c>0$ denotes the velocity of sound, $\beta>0$ the diffusivity of sound and $\gamma>0$ the parameter of nonlinearity.  The Westervelt equation can be regarded as a simplification of Kuznetsov's equation
\begin{align}\label{eq:Kuznetsov}
 u_{tt} - c^2\Delta u- \beta \Delta u_t &= \gamma (u^2)_{tt}+|v|^2_{tt}.
\end{align}
Here the velocity fluctuation $v(t,x)$ is related to the pressure fluctuation by means of an acoustic potential $\psi(t,x)$, such that $u=\rho_0\psi_t$, $v=-\nabla\psi$ with ambient density $\rho_0>0$. This equation is used as a basic equation in nonlinear acoustics, see \cite{HaBl98,Kal07,LeSeWo08}. It can be derived from the balances of mass and momentum (the compressible Navier-Stokes equations for Newtonian fluids) and a state equation for the pressure-dependent density of the fluid.  We refer to \cite{Kal07} for a derivation of Kuznetsov's equation.

Observe that the left hand side of \eqref{eq:Westervelt0} is a strongly damped wave equation, which is of parabolic type. Taking the highest order terms on the right hand side of \eqref{eq:Westervelt0} into account, we claim that parabolicity is preserved provided that the pressure fluctuation $u$ admits values which are sufficiently close to zero. To see this, we use the identity
$$(u^2)_{tt}=2uu_{tt}+2(u_t)^2,$$
wherefore we may rewrite \eqref{eq:Westervelt0} as follows:
$$(c^{-2}-2\gamma u)u_{tt}- \Delta u - \beta \Delta u_t = 2\gamma (u_t)^2.$$
Consequently we see that \eqref{eq:Westervelt0} degenerates as $u$ gets close to $\frac{1}{2\gamma c^2}$. To this end we allow the function $|u|$ to take values in the interval $[0,\frac{1}{2\gamma c^2})$ in order to use features from the parabolic theory for PDEs.

If one considers the Westervelt equation \eqref{eq:Westervelt0} in a bounded framework, i.e. $x\in \Omega$ and $\Omega\subset\mathbb{R}^d$ is open and bounded, then one has to equip \eqref{eq:Westervelt0} with suitable boundary conditions on the boundary $\partial\Omega$. The Westervelt (resp.\ Kuznetsov) equation with linear boundary conditions of Dirichlet- or Neumann-type has been analyzed by a number of authors, see e.g.\ \cite{ClKaVe09,Kal10,KaLa09, KaLa11,KaLa12,MeWi11,MeWi13}, which is just a selection. The basic difference is the choice of the functional analytic setting. While in \cite{ClKaVe09,Kal10,KaLa09,KaLa11,KaLa12} the analysis is based on  $L_2$-theory and energy estimates, the authors in \cite{MeWi11,MeWi13} use the technique of maximal regularity of type $L_p$ and obtain optimal regularity results, which is feasible by the parabolic nature of \eqref{eq:Westervelt0} or \eqref{eq:Kuznetsov} as long as $u$ is close to zero. Moreover, in \cite{KaLa09,KaLa12,MeWi11,MeWi13}, the authors prove exponential stability of the trivial solution $u=0$ of the Westervelt or Kuznetsov equation with homogeneous Dirichlet boundary conditions.

From a point of view of applications one is often confronted with the situation that the region of interest is small compared to the underlying acoustic propagation domain. One way out of this problem is to truncate the large domain and to equip \eqref{eq:Westervelt0} or \eqref{eq:Kuznetsov} with so-called \emph{absorbing boundary conditions}. Recently, Kaltenbacher \& Shevchenko \cite{KaSh15,KaSh15.1} derived and proposed absorbing boundary conditions of order zero and order one for the Westervelt equation \eqref{eq:Westervelt0} in one and two space dimensions. This type of boundary conditions can e.g.\ be interpreted as a kind of feedback control for stabilizing \eqref{eq:Westervelt0}. In this paper we consider absorbing boundary conditions of order zero, which look as follows:
\begin{equation}\label{eq:ABCzero}
\partial_\nu (u+\beta u_t)+u_t\sqrt{c^{-2}-2\gamma u}=0\quad\text{on}\ \partial\Omega.
\end{equation}
Here $\nu$ is the outer unit normal vector field on $\partial\Omega$ and $\partial_\nu$ denotes the normal derivative. At this point we want to emphasize that in contrast to the classical Dirichlet- or Neumann boundary conditions, the boundary condition \eqref{eq:ABCzero} is nonlinear.

Complementing \eqref{eq:Westervelt0} with initial conditions for $u$ and $u_t$, we end up with the initial boundary value problem
\begin{align}\label{eq:westervelt}
\begin{split}
c^{-2}u_{tt} - \Delta u - \beta \Delta u_t &= \gamma (u^2)_{tt},\quad\text{in }J\times\Omega,\\
      \partial_\nu (u+\beta u_t)+u_t\sqrt{c^{-2}-2\gamma u}&= 0,\quad\text{in }J\times\partial\Omega,\\
      (u(0),u_t(0)) &= (u_0,u_1) ,\quad\text{in }\Omega,
     \end{split}
\end{align}
for the Westervelt equation, where $J=(0,T)$ for some $T\in (0,\infty)$, $\Omega\subset\mathbb{R}^d$, $d\in\mathbb{N}$, is a bounded domain with boundary $\partial\Omega\in C^2$ and the parameters $c>0$, $\beta>0$ and $\gamma>0$ are given.

To the best of the authors' knowledge there seems to be only the article by Kaltenbacher \& Shevchenko \cite{KaSh15} which deals with the analysis of problem \eqref{eq:westervelt} in one and two space dimensions (the proofs of the results in \cite{KaSh15} are carried out in \cite{KaShArx}). The technique used in \cite{KaShArx,KaSh15} to establish well-posedness is based on an $L_2$-theory and energy estimates combined with the contraction mapping principle. The article \cite{KaSh15} is complemented with some numerical results, showing that the absorbing boundary conditions proposed and derived in \cite{KaSh15} demonstrate more accurate numerical results as compared to those proposed by Engquist \& Majda \cite{EngMa79}. Finally, it should be noted that the authors in \cite{KaSh15} also derive absorbing boundary conditions of first order which for the Westervelt equation result in dynamic boundary conditions for the pressure fluctuation $u$.

The present paper provides a rather complete analysis of problem \eqref{eq:westervelt}. We will present optimal conditions on the initial data $(u_0,u_1)$ for the existence and uniqueness of a solution to \eqref{eq:westervelt}, thereby improving the assumptions on $(u_0,u_1)$ in \cite{KaSh15} (for details see below). In addition, we investigate the temporal regularity of the solutions to \eqref{eq:westervelt} as well as their long-time behavior.

Our program for studying \eqref{eq:westervelt} is as follows. In Section 2 we consider the principal linearization of \eqref{eq:westervelt} in $u=0$ and we prove optimal regularity results of type $L_p$ for the resulting parabolic problem. Unfortunately one cannot directly apply the results in \cite{DHP03,DHP07} or \cite{LPS06} to the linear problem, since after a transformation of \eqref{eq:westervelt} to a first order system with respect to the variable $t$, the principal linearization is neither parameter elliptic nor normally elliptic. Instead we will treat the linearization of \eqref{eq:westervelt} in its original second order formulation as it has already been done in \cite{MeWi13} for the Kuznetsov equation \eqref{eq:Kuznetsov} with Dirichlet boundary conditions.

Section 3 is devoted to the proof of the following result concerning well-posedness of \eqref{eq:westervelt} under optimal conditions on the initial value $(u_0,u_1)$.
\begin{The}\label{thm:mainthm}
Let $p>d+1$, $p\neq 3$, $J=(0,T)$, $\Omega\subset\mathbb{R}^d$, $d\in\mathbb{N}$ be a bounded domain with boundary $\partial\Omega\in C^2$. Then for each $T\in(0,\infty)$ there exists $\delta>0$ such that for all $(u_0,u_1)\in W_p^2(\Omega)\times W_p^{2-2/p}(\Omega)=:X_\gamma$ satisfying
the estimate
$$\|u_0\|_{W_p^2(\Omega)}+\|u_1\|_{W_p^{2-2/p}(\Omega)}\le\delta,$$  and the compatibility condition
\begin{equation}\label{eq:compcondthm}
\partial_\nu (u_0+\beta u_1)+u_1\sqrt{c^{-2}-2\gamma u_0}= 0,\quad\text{on }\partial\Omega
\end{equation}
if $p>3$,
there is a unique solution
$$u\in W_p^2(J;L_p(\Omega))\cap W_p^1(J;W_p^2(\Omega))=:\mathbb{E}_1(J)$$
of \eqref{eq:westervelt}. In addition, the solution satisfies
$$\|u\|_\infty:=\max_{(t,x)\in [0,T]\times\overline{\Omega}}|u(t,x)|<\frac{1}{2\gamma c^2}$$
and the data-to-solution map
$$[(u_0,u_1)\mapsto u(u_0,u_1)]:B_{X_\gamma}(0,\delta)\to \mathbb{E}_1(J)$$
is continuous.
\end{The}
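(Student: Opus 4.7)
The plan is to recast \eqref{eq:westervelt} as a fixed-point problem in $\mathbb{E}_1(J)$ by isolating the principal linearization at $u=0$ and treating the remaining terms as quadratic perturbations. Using the identity $(u^2)_{tt}=2uu_{tt}+2(u_t)^2$ together with the Taylor expansion $c^{-1}-\sqrt{c^{-2}-2\gamma s}=\gamma c\,s+s^2H(s)$, valid for $|s|<\tfrac{1}{2\gamma c^2}$ with $H$ smooth on that interval, the system takes the form
\begin{align*}
c^{-2}u_{tt}-\Delta u-\beta\Delta u_t&=F(u):=2\gamma u\,u_{tt}+2\gamma(u_t)^2,\\
\partial_\nu(u+\beta u_t)+c^{-1}u_t&=G(u):=u_t\bigl(c^{-1}-\sqrt{c^{-2}-2\gamma u}\bigr),
\end{align*}
coupled with $(u(0),u_t(0))=(u_0,u_1)$. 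By the maximal $L_p$-regularity result of Section 2, the associated linear operator $\mathcal L$ is an isomorphism from $\mathbb{E}_1(J)$ onto the natural data space $\mathbb{F}(J)\times X_\gamma$, subject to a linear compatibility condition at $t=0$ in the regime $p>3$.

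\textbf{Fixed-point argument.} Because $p>d+1$, the embedding $\mathbb{E}_1(J)\hookrightarrow C(\bar J;C^1(\bar\Omega))$ is available, and a mixed-derivative Sobolev embedding also yields $u_t\in C(\bar J\times\bar\Omega)$. Hence for $\|u\|_{\mathbb{E}_1(J)}$ sufficiently small the sup-norm of $u$ stays below $\tfrac{1}{4\gamma c^2}$, so the degeneracy threshold is not reached and $\sqrt{c^{-2}-2\gamma u}$ is well-defined. Let $u_\ast\in\mathbb{E}_1(J)$ denote the solution of the linear problem with data $(0,0,u_0,u_1)$, and set $T(w):=\mathcal L^{-1}(F(w),G(w),u_0,u_1)$ on the closed ball $\overline{B}(u_\ast,r)\subset\mathbb{E}_1(J)$. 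The key bilinear estimate
$$\|F(w)-F(\tilde w)\|_{\mathbb{F}(J)}+\|G(w)-G(\tilde w)\|_{\mathbb{F}(J)}\le C\bigl(\|w\|_{\mathbb{E}_1(J)}+\|\tilde w\|_{\mathbb{E}_1(J)}\bigr)\|w-\tilde w\|_{\mathbb{E}_1(J)},$$
together with the smallness of $\|u_\ast\|_{\mathbb{E}_1(J)}$ (controlled by $\delta$ via the linear isomorphism), makes $T$ a contraction on that ball for $\delta$ and $r$ small enough. The unique fixed point is the sought solution, and perturbation of the contraction in the parameter $(u_0,u_1)$ yields the continuous dependence statement. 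The bound $\|u\|_\infty<\tfrac{1}{2\gamma c^2}$ follows from the embedding and smallness. The compatibility condition \eqref{eq:compcondthm} enters precisely when $p>3$: in that regime the full nonlinear boundary datum has a well-defined spatial trace at $t=0$ that must match the trace of the linear expression $\partial_\nu(u_0+\beta u_1)+c^{-1}u_1$, forcing exactly \eqref{eq:compcondthm}; for $d+1<p<3$ the temporal regularity of the boundary datum is below $1/p$ and no trace at $t=0$ is required.

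\textbf{Main obstacle.} The principal difficulty I expect lies in verifying the quadratic mapping property of the nonlinear boundary operator $G$ into the anisotropic fractional space on $J\times\partial\Omega$ that serves as the codomain of the boundary datum in the linear theory of Section 2. Both the temporal and the tangential spatial regularities are fractional, and $G$ combines a composition with the non-polynomial function $\sqrt{c^{-2}-2\gamma u}$ with a product containing $u_t$, whose boundary trace has only fractional regularity. The standard route is to use the Taylor expansion above to reduce matters to monomials $u^{k}u_t$ and to handle the products via the trace theorem for $\mathbb{E}_1(J)$ combined with pointwise-multiplier results for fractional Sobolev/Besov spaces; the embedding $\mathbb{E}_1(J)\hookrightarrow C(\bar J;C^1(\bar\Omega))$ provides the uniform multiplier bounds for the factor $u$. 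By contrast, the interior nonlinearity $F$ is comparatively easy, since an $L_p(J;L_p(\Omega))$-estimate only requires pointwise control of $u$ and $u_t$ against the $L_p$-norms of $u_{tt}$ and $u_t$.
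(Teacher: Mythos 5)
Your overall route (quadratic rewriting around the principal linearization with $\alpha=c^{-1}$, then a contraction in $\mathbb{E}_1(J)$ built on the maximal regularity isomorphism of Theorem \ref{thm:linwestervelt1}) is viable and close in spirit to the paper, which instead subtracts a linear reference solution and applies the implicit function theorem; both arguments ultimately hinge on the mapping property you single out as the main obstacle, namely that $w\mapsto w_t\sqrt{c^{-2}-2\gamma w}\,|_{\partial\Omega}$ acts (smoothly, quadratically near $0$) into $\mathbb{F}(J)$. The paper closes exactly this point in Proposition \ref{prop:regularity}, not by fractional multiplier theorems, but by observing that $\mathbb{F}(J)$ is the trace space of $H_p^{1/2}(J;L_p(\Omega))\cap L_p(J;W_p^1(\Omega))$ and estimating everything in the Banach algebra $W_p^1(J;L_p(\Omega))\cap L_p(J;W_p^1(\Omega))$ (valid since $p>d+1$), where the composition $\sqrt{c^{-2}-2\gamma w}$ can be handled by the chain rule; so your asserted bilinear estimate for $G$ can indeed be justified, and your estimate for $F$ (which should be measured in $\mathbb{E}_0(J)$, not $\mathbb{F}(J)$) is straightforward.

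The genuine gap is in the compatibility bookkeeping for $p>3$, and it breaks your construction as written. Your reference function $u_\ast$ is declared to be ``the solution of the linear problem with data $(0,0,u_0,u_1)$'', but Theorem \ref{thm:linwestervelt1} with boundary operator $\partial_\nu(\cdot+\beta\,\partial_t\cdot)+c^{-1}\partial_t\cdot$ and datum $g=0$ requires $\partial_\nu u_0+\beta\partial_\nu u_1+c^{-1}u_1=0$ on $\partial\Omega$; this is \emph{not} implied by the nonlinear condition \eqref{eq:compcondthm} unless $u_1\sqrt{c^{-2}-2\gamma u_0}=c^{-1}u_1$ on $\partial\Omega$, so in general $u_\ast\notin\mathbb{E}_1(J)$ and your ball has no admissible center. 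Likewise, $T(w)=\mathcal L^{-1}(F(w),G(w),u_0,u_1)$ is only defined when $G(w)(0)=\partial_\nu u_0+\beta\partial_\nu u_1+c^{-1}u_1$, which forces $w(0)=u_0$, $w_t(0)=u_1$ and then reduces precisely to \eqref{eq:compcondthm}; a generic element of $\overline B(u_\ast,r)\subset\mathbb{E}_1(J)$ does not satisfy this, so $T$ is not defined on your ball. Both defects are repairable, and the repair is exactly the device the paper uses: construct the reference solution with a \emph{compatible, nonzero} boundary datum, e.g. $g(t)=e^{\Delta_{\partial\Omega}t}\bigl[\partial_\nu(u_0+\beta u_1)+c^{-1}u_1\bigr]$ (the paper uses $e^{\Delta_{\partial\Omega}t}$ extensions $g$ and $h$ and the weight $\alpha=0$ for the auxiliary problem), and run the fixed-point argument on $\overline B(u_\ast,r)\cap\bigl(u_\ast+{}_0\mathbb{E}_1(J)\bigr)$, which is invariant under $T$ since $T(w)$ always has initial traces $(u_0,u_1)$. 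Without these modifications the argument fails at the very point where the hypothesis \eqref{eq:compcondthm} is supposed to enter.
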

For the proof of Theorem \ref{thm:mainthm} we employ the implicit function theorem and the results on optimal regularity of the linearization (see Section 2) in a neighborhood of $u=0$. This in turn yields the desired bound $\|u\|_\infty<\frac{1}{2\gamma c^2}$. At this point we want to emphasize that the assertions of Theorem \ref{thm:mainthm} remain true if one replaces the assumption $p>d+1$ by the weaker condition $p>\max\{\frac{d}{2},\frac{d}{4}+1\}$ (cf.\ Remark \ref{rem:remarkonp}). Therefore the case $p=2$ can be covered provided that $d\le 3$. In particular, for $p=2$, the initial value $(u_0,u_1)$ has to be small in $W_2^2(\Omega)\times W_2^1(\Omega)$ compared to the assumption in \cite[Theorems 3.1 \& 3.2]{KaSh15} where $(u_0,u_1)$ has to be small in $W_2^2(\Omega)\times W_2^2(\Omega)$. Thus, we were able to reduce the regularity for $u_1$. Note that the compatibility condition \eqref{eq:compcondthm} is not needed in case $p<3$.

In Section 4 we study the regularity of the solution with respect to the temporal variable $t$. We use a parameter trick which goes back to Angenent \cite{Ang90}, combined with the implicit function theorem to prove that the solution is infinitely many times differentiable with respect to $t$ as soon as $t>0$, see Theorem \ref{thm:HR}. This result reflects the parabolic regularization effect.

Finally, in Section 5, we address the question about the long-time behavior of solutions to \eqref{eq:westervelt}. For that purpose we reformulate \eqref{eq:westervelt} as a first order system with respect to $t$ and consider the set $\mathcal{E}$ of equilibria, given by
$$\mathcal{E}=\left\{(r,0): r\in\mathbb{R},\ |r|<\frac{1}{2\gamma c^2}\right\}.$$
If $A_0$ denotes the full linearization in $(r,0)\in\mathcal{E}$, we prove that
\begin{itemize}
\item $0\in\sigma(A_0)$ is semi-simple and
\item $\sigma(A_0)\backslash\{0\}\subset\mathbb{C}_-=\{z\in\mathbb{C}:\operatorname{Re}z<0\}$.
\end{itemize}
Relying on the maximal regularity results from Sections 2 \& 3 and applying the results in \cite{PSZ09} this implies that each $(r,0)\in \mathcal{E}$ is stable (in the sense of Lyapunov) and each solution of \eqref{eq:westervelt} with initial values sufficiently close to $(r,0)$ converges at an exponential rate to a possibly different equilibrium as $t\to\infty$, see Theorem \ref{thm:Stab}.

\section{Maximal regularity of the linearization}

Let's take a look at the regularity of $u$ at the boundary. For
$$u\in W_p^2(J;L_p(\Omega))\cap W_p^1(J;W_p^2(\Omega))$$
we have by trace theory
$$\partial_\nu u_t\in W_p^{1/2-1/2p}(J;L_p(\partial\Omega))\cap L_p(J;W_p^{1-1/p}(\partial\Omega)),$$
$$\partial_\nu u\in W_p^{3/2-1/2p}(J;L_p(\partial\Omega))\cap W_p^1(J;W_p^{1-1/p}(\partial\Omega)),$$
and
$$u_t|_{\partial\Omega}\in W_p^{1-1/2p}(J;L_p(\partial\Omega))\cap L_p(J;W_p^{2-1/p}(\partial\Omega)),$$
hence $\partial_\nu u$ as well as $u_t|_{\partial\Omega}$ carry additional time regularity compared to $\partial_\nu u_t$. The same holds for the term $\Delta u$ compared to $u_{tt}$ and $\Delta u_t$, since
$$\Delta u\in W_p^1(J;L_p(\Omega)).$$
We use these facts for the terms $\partial_\nu u$ as well as $\Delta u$ and study in a first step the linear problem
\begin{align}\label{eq:linwestervelt}
\begin{split}
c^{-2}u_{tt} - \beta \Delta u_t &= f,\quad\text{in }J\times\Omega,\\
      \beta \partial_\nu u_t+\alpha u_t&= g,\quad\text{in }J\times\partial\Omega,\\
      (u(0),u_t(0)) &= (u_0,u_1) ,\quad\text{in }\Omega,
     \end{split}
\end{align}
for $\alpha\ge 0$ and given functions $f\in L_p(J;L_p(\Omega))$,
$$g\in W_p^{1/2-1/2p}(J;L_p(\partial\Omega))\cap L_p(J;W_p^{1-1/p}(\partial\Omega))$$
$(u_0,u_1)\in W_p^2(\Omega)\times W_p^{2-2/p}(\Omega)$ satisifying the compatibility condition
\begin{equation}\label{eq:vertrbed}
\beta\partial_\nu u_1+\alpha u_1= g(0)
\end{equation}
on $\{t=0\}\times\partial\Omega$ if $p>3$.

Let us solve the problem
\begin{align}\label{eq:linwestervelt2}
\begin{split}
c^{-2}v_t - \beta \Delta v &= f,\quad\text{in }J\times\Omega,\\
      \beta \partial_\nu v+\alpha v&= g,\quad\text{in }J\times\partial\Omega,\\
      v(0) &= u_1 ,\quad\text{in }\Omega,
     \end{split}
\end{align}
by \cite[Theorem 2.1]{DHP07} to obtain a unique solution
$$v\in W_p^1(J;L_p(\Omega))\cap L_p(J;W_p^2(\Omega)).$$
This is possible since the given functions $(f,g,u_1)$ belong to the optimal regularity classes and the compatibility condition \eqref{eq:vertrbed} holds. Then we define
$$u(t,x):=u_0(x)+\int_0^t v(s,x)ds$$
for all $t\in [0,T]$ and $x\in\Omega$. Clearly we have $u(0,x)=u_0(x)$, $\partial_t^ju(t,x)=\partial_t^{j-1}v(t,x)$, $j=1,2$,
$$u\in L_p(0,T;W_p^2(\Omega)),\quad u_t=v\in W_p^1(J;L_p(\Omega))\cap L_p(J;W_p^2(\Omega))$$
and
$$u_{tt}=v_t\in L_p(J;L_p(\Omega))$$
for $J=[0,T]$ and every finite $T>0$.
This implies that
$$u\in W_p^2(J;L_p(\Omega))\cap W_p^1(J;W_p^2(\Omega))$$
and $u$ solves \eqref{eq:linwestervelt}, showing existence. To prove uniqueness, assume that $u_1,u_2$ solve \eqref{eq:linwestervelt}, hence $u:=u_1-u_2$ solves \eqref{eq:linwestervelt} with $(f,g,u_0,u_1)=0$. Defining $v:=u_t$ it follows that $v$ solves \eqref{eq:linwestervelt2} with trivial data $(f,g,u_1)=0$. Since the solution to \eqref{eq:linwestervelt2} is unique, it follows that $v=0$, hence $u_t=0$, hence $u=u_0=0$. We summarize the preceeding result in the following
\begin{Lem}\label{lem:linwestervelt1}
Let $p\in (1,\infty)$, $p\neq 3$, $\alpha\ge 0$, $J=(0,T)$, $T\in (0,\infty)$, $\Omega\subset\mathbb{R}^d$, $d\in\mathbb{N}$, be a bounded domain with boundary $\partial\Omega\in C^2$. Then there exists a unique solution
$$u\in W_p^2(J;L_p(\Omega))\cap W_p^1(J;W_p^2(\Omega))=:\mathbb{E}_1(J)$$
of \eqref{eq:linwestervelt} if and only if the data satisfy the following conditions.
\begin{enumerate}
\item $f\in L_p(J;L_p(\Omega))=:\mathbb{E}_0(J)$;
\item $g\in W_p^{1/2-1/2p}(J;L_p(\partial\Omega))\cap L_p(J;W_p^{1-1/p}(\partial\Omega))=:\mathbb{F}(J)$;
\item $(u_0,u_1)\in W_p^2(\Omega)\times W_p^{2-2/p}(\Omega)=:X_\gamma$;
\item $\beta\partial_\nu u_1+\alpha u_1=g(0)$ if $p>3$.
\end{enumerate}
\end{Lem}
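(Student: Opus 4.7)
The plan is to reduce the second-order-in-time problem \eqref{eq:linwestervelt} to a first-order parabolic Robin problem for the new unknown $v:=u_t$, to which one can directly apply the $L_p$-maximal regularity result of Denk--Hieber--Pr\"uss. The key structural observation is that the left-hand side of \eqref{eq:linwestervelt} involves $u_{tt}$ and $\Delta u_t$ but no undifferentiated $u$, and the boundary condition only sees $u_t$ as well; so formally $v=u_t$ ought to satisfy \eqref{eq:linwestervelt2} with initial value $u_1$. The strategy is then: solve \eqref{eq:linwestervelt2} for $v$, recover $u$ by antidifferentiation in time, and verify the equations.

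For the sufficiency direction, I would first check that conditions (1), (2) and the $u_1$-component of (3) place $(f,g,u_1)$ in exactly the optimal regularity classes required by \cite[Theorem 2.1]{DHP07}, while the Robin compatibility $\beta\partial_\nu u_1+\alpha u_1=g(0)$ in that theorem is precisely condition (4) (which is meaningful only when $p>3$, since otherwise the traces $g(0)$ and $\partial_\nu u_1$ on $\partial\Omega$ need not be defined). Invoking that theorem yields a unique
\[
  v\in W_p^1(J;L_p(\Omega))\cap L_p(J;W_p^2(\Omega)).
\]
I would then set
\[
  u(t,x):=u_0(x)+\int_0^t v(s,x)\,ds,
\]
so that $u(0)=u_0$, $u_t=v$, $u_t(0)=u_1$, $u_{tt}=v_t\in L_p(J;L_p(\Omega))$ and $\Delta u_t=\Delta v\in L_p(J;L_p(\Omega))$. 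Since $u_0\in W_p^2(\Omega)$ by (3) and $t\mapsto\int_0^t v(s)\,ds$ is continuous into $W_p^2(\Omega)$ with weak derivative $v\in L_p(J;W_p^2(\Omega))$, one obtains $u\in \mathbb{E}_1(J)$; substitution then shows $u$ solves \eqref{eq:linwestervelt}.

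For uniqueness I would run the same reduction in reverse: any two solutions differ by a $u\in\mathbb{E}_1(J)$ with zero data, and $v:=u_t$ then solves \eqref{eq:linwestervelt2} with trivial $(f,g,u_1)=0$, so $v\equiv 0$ by the DHP uniqueness, whence $u\equiv u_0\equiv 0$. Necessity of (1)--(4) is a matter of trace bookkeeping: given $u\in \mathbb{E}_1(J)$, read off $f$ from the PDE and $g$ from the boundary relation; the usual anisotropic trace theorem applied to $u_t\in W_p^1(J;L_p(\Omega))\cap L_p(J;W_p^2(\Omega))$ lands the boundary expression in $\mathbb{F}(J)$, and evaluating the boundary identity at $t=0$ yields (4) whenever the relevant traces exist, i.e.\ when $p>3$.

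I do not anticipate a serious obstacle; the only substantive input is the DHP maximal regularity theorem for the parabolic Robin problem, and everything else is one-dimensional antidifferentiation in time plus routine trace estimates. The one point requiring mild care is that the identification of spaces be done in the correct order so that the integration step preserves $W_p^2$-regularity uniformly in $t$, which follows from $v\in L_p(J;W_p^2(\Omega))$ and H\"older's inequality on the finite interval $J$.
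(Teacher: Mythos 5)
Your proposal is correct and follows essentially the same route as the paper: reduce to the first-order Robin problem \eqref{eq:linwestervelt2} for $v=u_t$, apply the maximal regularity theorem of Denk--Hieber--Pr\"uss, recover $u$ by $u(t)=u_0+\int_0^t v(s)\,ds$, and obtain uniqueness and the necessity of the conditions by the reverse reduction together with trace theory. No substantive differences to report.
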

\begin{proof}
It remains to prove the necessity of the conditions. If
$$u\in W_p^2(J;L_p(\Omega))\cap W_p^1(J;W_p^2(\Omega))$$
solves \eqref{eq:linwestervelt} then
$$u_t\in  W_p^1(J;L_p(\Omega))\cap L_p(J;W_p^2(\Omega))$$
solves \eqref{eq:linwestervelt2} and the assertions for $f,g$ and $u_1$ follow from the equations and trace theory, see e.g.\ \cite{DHP07,PS16}. Finally, by Sobolev embedding, we obtain
$$u\in W_p^1(J;W_p^2(\Omega))\hookrightarrow C([0,T];W_p^2(\Omega)),$$
hence $u_0\in W_p^2(\Omega)$.
\end{proof}
For $u\in \mathbb{E}_1(J)$ let
$$Lu:=[c^{-2}u_{tt}-\beta\Delta u_t,\beta\partial_\nu u_t+\alpha u_t,(u(0),u_t(0))].$$
With this notation it follows from Lemma \ref{lem:linwestervelt1} that the linear mapping
$$L:\mathbb{E}_1(J)\to \{(f,g,(u_0,u_1))\in \mathbb{E}_0(J)\times\mathbb{F}(J)\times X_\gamma:\beta\partial_\nu u_1+\alpha u_1=g(0)\ \text{if}\ p>3\}$$
is a bounded isomorphism with a bounded inverse. It will be convenient to introduce the following subspaces of $\mathbb{F}(J)$ and $\mathbb{E}_1(J)$. Let
$$_0\mathbb{F}(J):=\{g\in \mathbb{F}(J):g(0)=0\}$$
and
$$_0\mathbb{E}_1(J):=\{u\in \mathbb{E}_1(J):u(0)=u_t(0)=0\}.$$
For $u\in\, _0\mathbb{E}_1(J)$ let
$$L_0u:=[c^{-2}u_{tt}-\beta\Delta u_t,\beta\partial_\nu u_t+\alpha u_t].$$
Then, by Lemma \ref{lem:linwestervelt1}, the mapping
$$L_0:\, _0\mathbb{E}_1(J)\to \mathbb{E}_0(J)\times\, _0\mathbb{F}(J)$$
is a bounded isomorphism and by standard reflection arguments it can be shown that the norm of the inverse $L_0^{-1}$ is independent of $T\in (0,T_0]$ for every fixed $T_0>0$.

We will now take care of the lower order terms $\partial_\nu u$ and $\Delta u$. To this end, consider first the case $(u_0,u_1)=0$ and define a mapping
$$R_0:\, _0\mathbb{E}_1(J)\to \mathbb{E}_0(J)\times\, _0\mathbb{F}(J)$$
by $R_0 u:=[-\Delta u,\partial_\nu u]$. The linear problem
\begin{align}\label{eq:linwesterveltfull}
\begin{split}
c^{-2}u_{tt} -\Delta u- \beta \Delta u_t &= f,\quad\text{in }J\times\Omega,\\
     \partial_\nu u+ \beta \partial_\nu u_t+\alpha u_t&= g,\quad\text{in }J\times\partial\Omega,\\
      (u(0),u_t(0)) &= (0,0) ,\quad\text{in }\Omega,
     \end{split}
\end{align}
is then equivalent to the abstract equation
$$L_0u+R_0u=[f,g]$$
for $u\in\, _0\mathbb{E}_1(J)$ and some given functions $(f,g)\in \mathbb{E}_0(J)\times\, _0\mathbb{F}(J)$. Observe that
\begin{equation}\label{eq:opeq}
L_0 u+R_0u=L_0(I+L_0^{-1}R_0),
\end{equation}
since $L_0$ is invertible by Lemma \ref{lem:linwestervelt1}. Using the fact that the norm of $L_0^{-1}$ does not depend on $T\in (0,T_0]$ for some fixed $T_0>0$, it follows that there exists a constant $C=C(T_0)>0$ such that
$$\|L_0^{-1}R_0 u\|_{_0\mathbb{E}_1(J)}\le C\|R_0u\|_{\mathbb{E}_0(J)\times\, _0\mathbb{F}(J)}$$
for all $T\in (0,T_0]$. Furthermore we have
$$\|R_0 u\|_{\mathbb{E}_0(J)\times\, _0\mathbb{F}(J)}=\|\Delta u\|_{\mathbb{E}_0(J)}+\|\partial_\nu u\|_{_0\mathbb{F}(J)}.$$
We use the Sobolev embedding
$$_0W_p^1(0,T;W_p^2(\Omega))\hookrightarrow \{u\in C([0,T];W_p^2(\Omega)):u(0)=0\}$$
and the fact that the corresponding embedding constant $m>0$ is independent of $T>0$ to obtain the estimate
\begin{multline*}
\|\Delta u\|_{\mathbb{E}_0(J)}\le \|u\|_{L_p(0,T;W_p^2(\Omega))}\le T^{1/p}\|u\|_{C([0,T];W_p^2(\Omega))}\\
\le mT^{1/p}\|u\|_{_0W_p^1(0,T;W_p^2(\Omega))}\le mT^{1/p}\|u\|_{_0\mathbb{E}_1(J)}.
\end{multline*}
Furthermore, by trace theory we have
$$\|\partial_\nu u\|_{_0\mathbb{F}(J)}\le C\|u\|_{_0W_p^1(0,T;L_p(\Omega))\cap L_p(0,T;W_p^2(\Omega))},$$
where the constant $C>0$ is again independent of $T>0$. Making use of Sobolev embeddings again, we obtain as above a constant $C>0$ which does not depend on $T>0$ such that
$$\|\partial_\nu u\|_{_0\mathbb{F}(J)}\le CT^{1/p}\|u\|_{_0\mathbb{E}_1(J)}.$$
In summary we have shown that the estimate
$$\|L_0^{-1}R_0 u\|_{_0\mathbb{E}_1(J)}\le C\|R_0u\|_{\mathbb{E}_0(J)\times\, _0\mathbb{F}(J)}\le CT^{1/p}\|u\|_{_0\mathbb{E}_1(J)}$$
holds for all $u\in\, _0\mathbb{E}_1(J)$. Therefore, if $0<T<\min\{1/C^p,T_0\}$, a Neumann series argument yields that the operator $I+L_0^{-1}R_0:\, _0\mathbb{E}_1(J)\to\, _0\mathbb{E}_1(J)$ is invertible, hence, by \eqref{eq:opeq},
$$L_0+R_0:\, _0\mathbb{E}_1(J)\to\mathbb{E}_0(J)\times\, _0\mathbb{F}(J)$$
is invertible as well.

In a next step we take nontrivial initial values into account. To this end let $f\in \mathbb{E}_0(J)$, $g\in\mathbb{F}(J)$ and $(u_0,u_1)\in X_\gamma$ be given such that
$$\partial_\nu u_0+\beta\partial_\nu u_1+\alpha u_1=g(0)$$
on $\{t=0\}\times\partial\Omega$ if $p>3$. Extend $u_1\in W_p^{2-2/p}(\Omega)$ to some function $\tilde{u}_1\in W_p^{2-2/p}(\mathbb{R}^d)$, which is always possible by the assumption $\partial\Omega\in C^2$.
Then solve the full space problem
$$c^{-2}\tilde{w}_t-\beta\Delta \tilde{w}=0,\ \text{in}\ (0,T)\times\mathbb{R}^d,\quad \tilde{w}(0)=\tilde{u}_1\ \text{in}\ \mathbb{R}^d,$$
to obtain a unique solution
$$\tilde{w}\in W_p^1(0,T;L_p(\mathbb{R}^d))\cap L_p(0,T;W_p^2(\mathbb{R}^d)),$$
see e.g.\ \cite[Chapter II]{DHP03} or \cite[Chapter 6]{PS16}. This in turn implies that the restriction $w$ of $\tilde{w}$ to $\Omega$ satisfies
$$w\in W_p^1(0,T;L_p(\Omega))\cap L_p(0,T;W_p^2(\Omega))$$
and $w(0)=\tilde{u}_1|_{\Omega}=u_1$. Then we solve the abstract equation
\begin{equation}\label{eq:opeq2}
L_0 \hat{u}+R_0 \hat{u}=[\hat{f},\hat{g}],
\end{equation}
where $\hat{f}:=f+\Delta u_0+\Delta\int_0^t w(s) ds$ and
$$\hat{g}:=g-\partial_\nu u_0-\beta\partial_\nu w-\alpha w-\partial_\nu\int_0^t w(s) ds.$$
Since $\hat{f}\in\mathbb{E}_0(J)$ and $\hat{g}\in\, _0\mathbb{F}(J)$, this yields a unique solution $\hat{u}\in\, _0\mathbb{E}_1(J)$ of \eqref{eq:opeq2}. Defining
$$u:=u_0+\hat{u}+\int_0^tw(s)ds$$
it follows that $u\in\mathbb{E}_1(J)$ solves
\begin{align}\label{eq:linwesterveltfull2}
\begin{split}
c^{-2}u_{tt} -\Delta u- \beta \Delta u_t &= f,\quad\text{in }(0,T)\times\Omega,\\
     \partial_\nu u+ \beta \partial_\nu u_t+\alpha u_t&= g,\quad\text{in }(0,T)\times\partial\Omega,\\
      (u(0),u_t(0)) &= (u_0,u_1) ,\quad\text{in }\Omega,
     \end{split}
\end{align}
and the solution is unique by the considerations above. A successive application of this procedure yields a unique solution $u\in \mathbb{E}_1(J)$ on \emph{any finite} interval $(0,T)$. We have thus proven the following result.
\begin{The}\label{thm:linwestervelt1}
Let $p\in (1,\infty)$, $p\neq 3$, $\alpha\ge 0$, $J=(0,T)$, $T\in (0,\infty)$, $\Omega\subset\mathbb{R}^d$, $d\in\mathbb{N}$, be a bounded domain with boundary $\partial\Omega\in C^2$. Then there exists a unique solution
$$u\in W_p^2(J;L_p(\Omega))\cap W_p^1(J;W_p^2(\Omega))$$
of \eqref{eq:linwesterveltfull2} if and only if the data satisfy the following conditions.
\begin{enumerate}
\item $f\in L_p(J;L_p(\Omega))$;
\item $g\in W_p^{1/2-1/2p}(J;L_p(\partial\Omega))\cap L_p(J;W_p^{1-1/p}(\partial\Omega))$;
\item $(u_0,u_1)\in W_p^2(\Omega)\times W_p^{2-2/p}(\Omega)$;
\item $\partial_\nu u_0+\beta\partial_\nu u_1+\alpha u_1=g(0)$ if $p>3$.
\end{enumerate}
There exists a constant $C=C(T)>0$ such that the estimate
\begin{equation}\label{eq:MRest}
\|u\|_{\mathbb{E}_1(J)}\le C(\|f\|_{\mathbb{E}_0(J)}+\|g\|_{\mathbb{F}(J)}+\|(u_0,u_1)\|_{X_\gamma})
\end{equation}
is valid.
\end{The}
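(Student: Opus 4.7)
The plan is to reduce the full linear problem \eqref{eq:linwesterveltfull2} to the principal-part problem already solved in Lemma~\ref{lem:linwestervelt1} by treating $-\Delta u$ and $\partial_\nu u$ as a lower-order perturbation of $-\beta\Delta u_t$ and $\beta\partial_\nu u_t+\alpha u_t$. Necessity of conditions~(1)--(4) follows exactly as in Lemma~\ref{lem:linwestervelt1} by trace theory and the embedding $W_p^1(J;W_p^2(\Omega))\hookrightarrow C([0,T];W_p^2(\Omega))$; I therefore concentrate on sufficiency and the estimate \eqref{eq:MRest}.

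First I reduce to zero initial data. Given $(u_0,u_1)\in X_\gamma$ satisfying the compatibility condition at $t=0$, I extend $u_1$ to $\tilde u_1\in W_p^{2-2/p}(\mathbb{R}^d)$ and solve the full-space heat problem for $\tilde w$ with $\tilde w(0)=\tilde u_1$ by standard maximal regularity on $\mathbb{R}^d$. Setting $w:=\tilde w|_\Omega$ and $\bar u(t,x):=u_0(x)+\int_0^t w(s,x)\,ds$, the function $\bar u\in\mathbb{E}_1(J)$ matches the initial data. Writing $u=\bar u+\hat u$ then leads to an equation for $\hat u\in\,_0\mathbb{E}_1(J)$ with modified right-hand sides $(\hat f,\hat g)\in\mathbb{E}_0(J)\times\,_0\mathbb{F}(J)$; crucially, the compatibility condition is exactly what makes $\hat g$ vanish at $t=0$.

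Using the notation $L_0$ from the paper together with $R_0\hat u:=[-\Delta\hat u,\partial_\nu\hat u]$, the reduced problem becomes the operator equation $L_0\hat u+R_0\hat u=[\hat f,\hat g]$. By Lemma~\ref{lem:linwestervelt1} and a standard reflection argument, $L_0\colon\,_0\mathbb{E}_1(J)\to\mathbb{E}_0(J)\times\,_0\mathbb{F}(J)$ is an isomorphism whose inverse has norm bounded uniformly in $T\in(0,T_0]$. The decisive estimate is
$$\|R_0\hat u\|_{\mathbb{E}_0(J)\times\,_0\mathbb{F}(J)}\le CT^{1/p}\|\hat u\|_{\,_0\mathbb{E}_1(J)},$$
which I would derive from $\|\Delta\hat u\|_{L_p(J;L_p)}\le T^{1/p}\|\hat u\|_{C([0,T];W_p^2)}$ combined with the $T$-uniform embedding $\,_0W_p^1(J;W_p^2(\Omega))\hookrightarrow C([0,T];W_p^2(\Omega))$, and an analogous $T$-uniform trace estimate for $\partial_\nu\hat u$. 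A Neumann series argument then shows that $I+L_0^{-1}R_0$ is invertible on $\,_0\mathbb{E}_1(J)$ for $T$ small enough, which yields unique solvability on a short interval together with the short-time version of \eqref{eq:MRest}.

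To reach an arbitrary $T\in(0,\infty)$, I iterate: once a solution on $[0,T_*]$ is constructed, the traces $(u(T_*),u_t(T_*))\in X_\gamma$ satisfy the compatibility condition at $t=T_*$ automatically, because $u$ already solves \eqref{eq:linwesterveltfull2} there, so the short-time result applies again on $[T_*,2T_*]$. Finitely many concatenations give a unique solution on $(0,T)$, and summing the local estimates produces \eqref{eq:MRest} with a $T$-dependent constant. I expect the main technical point to be securing the $T$-independence of both the embedding constants and the norm of $L_0^{-1}$; this is what allows the Neumann series to close on a fixed positive interval, and without it the perturbation argument would collapse.
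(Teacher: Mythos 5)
Your proposal is correct and follows essentially the same route as the paper: reduction to zero initial data via extension of $u_1$ and the full-space heat problem, the perturbation $R_0\hat u=[-\Delta\hat u,\partial_\nu\hat u]$ of $L_0$ with $T$-uniform bounds and a Neumann series on a short interval, and concatenation to reach arbitrary finite $T$. The only cosmetic differences are the order of the two reductions and that the paper obtains \eqref{eq:MRest} from the open mapping theorem rather than by summing local estimates, which is equally valid.
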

\begin{proof}
Necessity follows as in Lemma \ref{lem:linwestervelt1} and the estimate \eqref{eq:MRest} is a consequence of the open mapping theorem.
\end{proof}
\begin{Rem}
Note that \eqref{eq:linwesterveltfull2} does not have optimal regularity of type $L_p$ on $\mathbb{R}_+$. Indeed, for $u_0=c\in\mathbb{R}$ and $u_1=0$, the pair $(c,0)\in\mathbb{E}_1(J)$ is a solution of \eqref{eq:linwesterveltfull2} with $(f,g)=0$, but $(c,0)\notin \mathbb{E}_1(\mathbb{R}_+)$.
\end{Rem}

\section{Nonlinear well-posedness}\label{sec:NWP}

Let us start with the following regularity result. In order to keep things simple, we assume for a moment that $\gamma=\frac{1}{2}$ and $c=1$ in $\eqref{eq:westervelt}_2$.
\begin{Pro}\label{prop:regularity}
Let $p>d+1$, let $J=[0,T]$ for some $T\in (0,\infty)$ and assume that $\Omega\subset\mathbb{R}^d$ is a bounded domain with boundary $\partial\Omega\in C^2$. For $(t,x)\in J\times\Omega$ and
$$u\in \mathbb{V}(J):=\{v\in \mathbb{E}_1(J):\|v\|_{L_\infty(J;L_\infty(\Omega))}<1\},$$
define $F(u)(t,x):=\left(u_t(t,x)\sqrt{1-u(t,x)}\right)|_{\partial\Omega}$. Then
\begin{enumerate}
\item $F:\mathbb{V}(J)\to \mathbb{F}(J)$,
\item $F\in C^\infty(\mathbb{V}(J),\mathbb{F}(J))$,
\item $F'(v)\hat{u}=\left(\hat{u}_t\sqrt{1-v}+\frac{v_t\hat{u}_t}{2\sqrt{1-v}}\right)|_{\partial\Omega}$, for $v\in\mathbb{V}(J)$ and $\hat{u}\in\mathbb{E}_1(J)$,
\item $[u\mapsto (u^2)_{tt}]\in C^\infty(\mathbb{E}_1(J);\mathbb{E}_0(J))$.
\end{enumerate}
\end{Pro}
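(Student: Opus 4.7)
The plan is to reduce everything to two standard ingredients: a Sobolev embedding making functions in $\mathbb{E}_1(J)$ and their boundary traces uniformly continuous, and the multiplicative/Nemytskii calculus on the anisotropic trace space $\mathbb{F}(J)$. First I would collect embeddings. Since $p>d+1$, classical mixed derivative and trace embeddings yield $\mathbb{E}_1(J)\hookrightarrow C(J;C^1(\overline\Omega))$ and $u_t\in C(J;W_p^{2-2/p}(\Omega))\hookrightarrow C(J;C(\overline\Omega))$. In particular, for $u\in\mathbb{V}(J)$ both $u$ and $u_t$ are uniformly bounded on $J\times\overline\Omega$ and $1-u$ is bounded away from $0$, so $\sqrt{1-u}$ is classically well defined and $\operatorname{dist}(u(J\times\overline\Omega),\{1\})>0$.

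With these bounds, assertion (4) is essentially algebraic. Expanding $(u^2)_{tt}=2uu_{tt}+2u_t^2$, H\"older gives
\[
\|uu_{tt}\|_{\mathbb{E}_0(J)}\le\|u\|_\infty\|u_{tt}\|_{\mathbb{E}_0(J)},\qquad \|u_t^2\|_{\mathbb{E}_0(J)}\le\|u_t\|_\infty\|u_t\|_{\mathbb{E}_0(J)},
\]
so the map lands in $\mathbb{E}_0(J)$. Being a quadratic polynomial between Banach spaces whose associated bilinear forms are bounded by the above estimate, it is automatically of class $C^\infty$.

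For (1)--(3) I would write $F(u)=(\operatorname{tr} u_t)\cdot h(\operatorname{tr} u)$, where $\operatorname{tr}$ is the spatial trace onto $\partial\Omega$ and $h(s):=\sqrt{1-s}\in C^\infty((-\infty,1))$. The trace identities listed at the beginning of Section~2 show that both $\operatorname{tr} u$ and $\operatorname{tr} u_t$ map continuously from $\mathbb{E}_1(J)$ into $\mathbb{F}(J)$ (in fact into strictly higher regularity subspaces of it). The key auxiliary fact I would invoke is that, for $p>d+1$, the anisotropic Slobodeckij space $\mathbb{F}(J)$ embeds into $\operatorname{BUC}(J\times\partial\Omega)$ and is a pointwise Banach algebra, and that a $C^\infty$ function $h$ induces a $C^\infty$ Nemytskii operator on the open subset of functions whose range stays in a compact subinterval of the domain of $h$; this is standard in the maximal-regularity literature (e.g.\ Amann, or Pr\"uss--Simonett). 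Since $\|\operatorname{tr} u\|_\infty\le\|u\|_\infty<1$ on $\mathbb{V}(J)$, the composition $h\circ\operatorname{tr} u$ is $C^\infty$ in $u$, and pointwise multiplication in $\mathbb{F}(J)$ is a bounded bilinear map, so $F\in C^\infty(\mathbb{V}(J),\mathbb{F}(J))$. The formula for $F'(v)\hat u$ in (3) then drops out of the product and chain rules, using $h'(s)=-\tfrac{1}{2\sqrt{1-s}}$.

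The main obstacle is genuinely that last auxiliary fact: making precise the statement that $\mathbb{F}(J)$ is a multiplication algebra in which $C^\infty$ Nemytskii operators act smoothly. This is a sharp anisotropic Sobolev embedding on the $d$-dimensional boundary cylinder $J\times\partial\Omega$, and is precisely what forces the hypothesis $p>d+1$ in the proposition. Everything else is a routine combination of trace theory, H\"older's inequality, and the elementary rules of Banach-space differentiation.
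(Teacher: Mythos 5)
Your treatment of assertion (4) is fine, and your general differentiation formalism (smooth Nemytskii operator composed with bounded multilinear maps) would be acceptable; but the key auxiliary fact your proof of (1)--(3) hinges on is false in the stated range of $p$. You claim that for $p>d+1$ the boundary trace space $\mathbb{F}(J)=W_p^{1/2-1/2p}(J;L_p(\partial\Omega))\cap L_p(J;W_p^{1-1/p}(\partial\Omega))$ embeds into $BUC(J\times\partial\Omega)$ and is a pointwise multiplication algebra. Counting parabolically, the boundary cylinder has anisotropic dimension $(d-1)+2=d+1$ while $\mathbb{F}(J)$ has parabolic order $1-1/p$ (time regularity is only $\tfrac12(1-1/p)$), so the Sobolev index is $1-1/p-(d+1)/p$, which is positive only for $p>d+2$. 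Equivalently, via the mixed derivative theorem one needs $\theta$ with $\theta\bigl(\tfrac12-\tfrac1{2p}\bigr)>\tfrac1p$ and $(1-\theta)\bigl(1-\tfrac1p\bigr)>\tfrac{d-1}{p}$, which forces $p>d+2$. Concretely, for $d=1$ and $2<p<3$ (a case covered by the proposition) one has $\mathbb{F}(J)=W_p^{1/2-1/2p}(J)$ on finitely many boundary points, which does not embed into $L_\infty(J)$ and is not an algebra. So the step ``pointwise multiplication in $\mathbb{F}(J)$ is bounded and $h$ acts smoothly on $\mathbb{F}(J)$'' breaks down exactly where the low time regularity of the trace space bites, and your closing remark that this embedding ``is precisely what forces $p>d+1$'' is off by one.

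The way to close the gap (and the route the paper takes) is to avoid doing the nonlinear analysis on the boundary space altogether: estimate $u_t\sqrt{1-u}$ \emph{before} taking the spatial trace, in the interior space $\mathbb{X}(J):=W_p^1(J;L_p(\Omega))\cap L_p(J;W_p^1(\Omega))$, whose time regularity is a full derivative; there the mixed derivative/Sobolev condition is $\tfrac1p+\tfrac dp<1$, i.e.\ exactly $p>d+1$, so $\mathbb{X}(J)$ is a Banach algebra embedded in $L_\infty$. Since $\mathbb{F}(J)$ is the spatial trace space of $H_p^{1/2}(J;L_p(\Omega))\cap L_p(J;W_p^1(\Omega))$ and $W_p^1(J;L_p(\Omega))\hookrightarrow H_p^{1/2}(J;L_p(\Omega))$, a bound in $\mathbb{X}(J)$ yields the required bound in $\mathbb{F}(J)$ after restriction to $\partial\Omega$. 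With $\mathbb{X}(J)$ in place of $\mathbb{F}(J)$, your factorization $F(u)=u_t\cdot\sqrt{1-u}$, the smoothness of $r\mapsto\sqrt{1-r}$ on $\{r<1\}$ (via a Taylor remainder or Nemytskii argument, using that $1-u$ stays bounded away from $0$ on $\mathbb{V}(J)$ and $\mathbb{E}_1(J)\hookrightarrow\mathbb{X}(J)$), and the product and chain rules then give (1)--(3) exactly as you intended.
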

\begin{proof}
1. Note that $\mathbb{F}(J)$ is the trace space of the anisotropic space
$$H_p^{1/2}(J;L_p(\Omega))\cap L_p(J;W_p^{1}(\Omega)),$$
see e.g.\ \cite[Theorem 4.5]{MeyrSchn12}. Furthermore, by Sobolev embedding, it holds that
$$W_p^{1}(J;L_p(\Omega))\hookrightarrow H_p^{1/2}(J;L_p(\Omega))$$
for each $p>1$. Therefore it suffices to estimate $F(u)$ in the norm of the space
$$W_p^{1}(J;L_p(\Omega))\cap L_p(J;W_p^{1}(\Omega))=:\mathbb{X}(J).$$
Note that in case $p>d+1$, the space $\mathbb{X}(J)$ is a Banach algebra. Since $\|u_t\|_{\mathbb{X}(J)}\le \|u\|_{\mathbb{E}_1(J)}$, it remains to estimate $\sqrt{1-u}$ in the norm of $\mathbb{X}(J)$. It holds that
$$\partial_t \sqrt{1-u(t,x)}=-\frac{u_t(t,x)}{2\sqrt{1-u(t,x)}},$$
hence
$$\left\|\frac{u_t}{2\sqrt{1-u}}\right\|_{L_p(L_p)}\le C\frac{\|u\|_{\mathbb{E}_1(J)}}{\min_{(t,x)\in J\times\overline{\Omega}}\sqrt{1-u(t,x)}}<\infty,$$
since $\mathbb{E}_1(J)\hookrightarrow W_{p}^1(J;L_{p}(\Omega))$.
Now consider $\sqrt{1-u}$ in the norm of $L_p(J;W_p^1(\Omega))$. To this end it will be sufficient to estimate
$$\nabla \sqrt{1-u(t,x)}=-\frac{\nabla u(t,x)}{2\sqrt{1-u(t,x)}}$$
in $L_p(J;L_p(\Omega))$. Since $\mathbb{E}_1(J)\hookrightarrow L_p(J;W_p^1(\Omega))$, we obtain the same estimate as above, hence $\|F(u)\|_{\mathbb{X}(J)}<\infty$. This proves the first assertion.

2. We show that $F\in C^1$, the existence of the higher order derivatives follows inductively. Again, since $\mathbb{X}(J)$ is an algebra, it suffices to show that $[u\mapsto\sqrt{1-u}]\in C^1(\mathbb{V}(J);\mathbb{X}(J))$. Fix $u\in \mathbb{V}(J)$ and let $\|h\|_{\mathbb{E}_1(J)}\le\delta$ with $\delta>0$ being sufficiently small such that $u+h\in \mathbb{V}(J)$. This is possible, since $\mathbb{V}(J)$ is open in $\mathbb{E}_1(J)$. By the fundamental theorem of calculus, we obtain the identity
\begin{multline*}
G(u(t,x)+h(t,x))-G(u(t,x))-G'(u(t,x))h(t,x)=\\
=\int_0^1\int_0^1 G''(u(t,x)+s\tau h(t,x))dsd\tau h(t,x)^2,
\end{multline*}
where $G(r):=\sqrt{1-r}$ and $r<1$. It is easy to see that $\|G''(u+s\tau h)\|_{\mathbb{X}(J)}$ is uniformly bounded with respect to $s,\tau\in[0,1]$ and $\|h\|_{\mathbb{E}_1(J)}\le\delta$. Therefore, the algebra property of $\mathbb{X}(J)$ and the fact that $\mathbb{E}_1(J)\hookrightarrow \mathbb{X}(J)$ yields that $[u\mapsto G(u)]$ is Frechet differentiable with derivative
$$G'(v)\hat{u}=-\frac{1}{2\sqrt{1-v}}\hat{u}.$$
valid for all $v\in\mathbb{V}(J)$ and $\hat{u}\in \mathbb{E}_1(J)$.
The continuity of the derivative follows in a very similar way, we skip the details.

3. The proof of this assertion follows directly from the proof of the second assertion and the product rule.

4. This statement has been proven in \cite[Section 3]{MeWi11} and \cite[Proof of Lemma~6]{MeWi13}.
\end{proof}
\begin{proof}[Proof of Theorem \ref{thm:mainthm}]
We will solve \eqref{eq:westervelt} by means of the implicit function theorem. To this end, let $T>0$ be fixed. Note that in case $p>3$ we have to take into account the nonlinear compatibility condition
\begin{equation}\label{eq:compcondnonlin}
\partial_\nu u_0+\beta\partial_\nu u_1+u_1\sqrt{c^{-2}-2\gamma u_0}=0
\end{equation}
between the initial vaules and the boundary condition on $\partial\Omega$. To this end, let $g=0$ if $p<3$ and
$$g(t):=e^{\Delta_{\partial\Omega}t}(\partial_\nu u_0+\beta\partial_\nu u_1),\ t\ge 0,$$
if $p>3$. Here $\Delta_{\partial\Omega}$ denotes the Laplace-Beltrami operator on $\partial\Omega$. It is well known that if $(\partial_\nu u_0+\beta\partial_\nu u_1)\in W_p^{1-3/p}(\partial\Omega)$ then
$$g\in W_p^{1/2-1/2p}(J;L_p(\partial\Omega))\cap L_p(J;W_p^{1-1/p}(\partial\Omega)),$$
see e.g.\ \cite[Proposition 3.4.3]{PS16}.
By Theorem \ref{thm:linwestervelt1} with $\alpha=0$ there exists a unique solution $u^*=u^*(u_0,u_1)\in \mathbb{E}_1(J)$ of
\begin{align}
\begin{split}\label{eq:linaux1}
c^{-2}u_{tt} -\Delta u- \beta \Delta u_t &= 0,\quad\text{in }(0,T)\times\Omega,\\
     \partial_\nu u+ \beta \partial_\nu u_t&= g,\quad\text{in }(0,T)\times\partial\Omega,\\
      (u(0),u_t(0)) &= (u_0,u_1) ,\quad\text{in}\ \Omega.
     \end{split}
\end{align}
Choose $\delta>0$ sufficiently small such that if $\|(u_0,u_1)\|_{X_\gamma}<\delta$, then
$$\max_{t\in[0,T]}\|u^*(t)\|_\infty<\frac{1}{4\gamma c^2}.$$
This is possible, since
$$\|u^*\|_{\mathbb{E}_1(J)}\le C(\|g\|_{\mathbb{F}(J)}+\|(u_0,u_1)\|_{X_\gamma})\le \tilde{C}\|(u_0,u_1)\|_{X_\gamma},$$
and $\mathbb{E}_1(J)\hookrightarrow C([0,T];C(\overline{\Omega}))$ provided that $p>d/2$. Let
\begin{equation}\label{eq:defbbW}
_0\mathbb{W}(J):=\left\{u\in\, _0\mathbb{E}_1(J):\max_{t\in[0,T]}\|u(t)\|_\infty<\frac{1}{4\gamma c^2}\right\}.
\end{equation}
Then $_0\mathbb{W}(J)$ is an open subset of $_0\mathbb{E}_1(J)$ provided that $p>d/2$. Next, we define a nonlinear mapping $H:\, _0\mathbb{W}(J)\times B_{X_\gamma}((0,0),\delta)\to \mathbb{E}_0(J)\times\, _0\mathbb{F}(J)$ by
$$H(u,(u_0,u_1)):=
\begin{bmatrix}
c^{-2}(u+u^*)_{tt} -\Delta (u+u^*)- \beta \Delta (u+u^*)_t -\gamma [(u+u^*)^2]_{tt}\\
 \partial_\nu u+ \beta \partial_\nu u_t+(u_t+u^*_t)\sqrt{c^{-2}-2\gamma (u+u^*)}-h.
\end{bmatrix},
$$
where $h=0$ if $p<3$ and
$$h(t)=e^{\Delta_{\partial\Omega}t}\left([u_1\sqrt{c^{-2}-2\gamma u_0}]|_{\partial\Omega}\right),\ t\ge 0,$$
if $p>3$. Then $h\in \mathbb{F}(J)$, since $[\sqrt{c^{-2}-2\gamma u_0}u_1]|_{\partial\Omega}\in W_p^{1-3/p}(\partial\Omega)$.

Note that $H(0,(0,0))=0$, since $g=0$ if $(u_0,u_1)=0$ and then $u^*=0$ is the unique solution of \eqref{eq:linaux1}. Furthermore
$$\max_{t\in[0,T]}\|u(t)+u^*(t)\|_\infty<\frac{1}{4\gamma c^2}+\frac{1}{4\gamma c^2}=\frac{1}{2\gamma c^2}$$
for all $(u,(u_0,u_1))\in\, _0 \mathbb{W}(J)\times B_{X_\gamma}((0,0),\delta)$. Since the linear mapping $[(u_0,u_1)\mapsto u^*(u_0,u_1)]$ from $X_\gamma$ to $\mathbb{E}_1(J)$ is smooth, it follows from Proposition \ref{prop:regularity} that $H\in C^\infty$ and
$$D_uH(0,(0,0))\hat{u}=
\begin{bmatrix}
c^{-2}\hat{u}_{tt} -\Delta \hat{u}- \beta \Delta \hat{u}_t \\
\partial_\nu \hat{u}+ \beta \partial_\nu \hat{u}_t+c^{-1}\hat{u}_t
\end{bmatrix}.
$$
By Theorem \ref{thm:linwestervelt1} with $\alpha=c^{-1}$, the operator $D_u H(0,(0,0))$ is invertible, hence, by the implicit function theorem, there exists a ball $B_{X_\gamma}((0,0),r)$, $0<r<\delta$ and a unique function $\psi\in C^\infty(B_{X_\gamma}((0,0),r);\, _0\mathbb{W}(J))$ such that $H(\psi(u_0,u_1),(u_0,u_1))=0$ for all $(u_0,u_1)\in B_{X_\gamma}((0,0),r)$ and $\psi(0,0)=0$. Then
$$u:=u(u_0,u_1):=\psi(u_0,u_1)+u^*(u_0,u_1)$$
is the unique solution of \eqref{eq:westervelt} provided that $(u_0,u_1)$ satisfy \eqref{eq:compcondnonlin} in case $p>3$.

Since $\psi$ as well as $u^*$ are continuous in $(u_0,u_1)$, the continuity of the data-to-solution map $[(u_0,u_1)\mapsto u(u_0,u_1)]$ follows readily.
\end{proof}
\begin{Rem}\label{rem:remarkonp}
The statements of Proposition \ref{prop:regularity} and Theorem \ref{thm:mainthm} remain true if one replaces the assumption $p>d+1$ by the weaker condition $p>\max\{\frac{d}{2},\frac{d}{4}+1\}$. Since in case $p>\max\{\frac{d}{2},\frac{d}{4}+1\}$ one cannot work with the algebra property of the space
$$W_p^{1}(J;L_p(\Omega))\cap L_p(J;W_p^{1}(\Omega)),$$
the proof of Proposition \ref{prop:regularity} requires more subtle estimates using H\"older's inequality and various Sobolev embeddings (see also \cite{MeWi11,MeWi13}). For the sake of simplicity we assumed the slightly stronger assumption $p>d+1$.
\end{Rem}

\section{Higher regularity}

Let $u_*\in\mathbb{E}_1(J)$ be the unique solution to \eqref{eq:westervelt} which exists thanks to Theorem \ref{thm:mainthm}. Let $\varepsilon\in (0,1)$ be fixed but as small as we please and let $J_\varepsilon=[0,T/(1+\varepsilon)]$. For $t\in J_\varepsilon$ and $\lambda\in (1-\varepsilon,1+\varepsilon)$ we define $u_\lambda(t):=u_*(\lambda t)$. Then $u_\lambda\in\mathbb{E}_1(J_\varepsilon)$ and $u_\lambda$ is a solution of the problem
\begin{align}\label{eq:westhighreg1}
\begin{split}
c^{-2}\partial_t^2u_\lambda - \lambda^2\Delta u_\lambda - \lambda\beta \Delta \partial_tu_\lambda &= \gamma (u_\lambda^2)_{tt},\quad\text{in }J_\varepsilon\times\Omega,\\
      \partial_\nu (\lambda u_\lambda+\beta \partial_tu_\lambda)+\partial_tu_\lambda\sqrt{c^{-2}-2\gamma u_\lambda}&= 0,\quad\text{in }J_\varepsilon\times\partial\Omega,\\
      (u_\lambda(0),\partial_tu_\lambda(0)) &= (u_0,\lambda u_1) ,\quad\text{in }\Omega.
     \end{split}
\end{align}
For given $\lambda\in (1-\varepsilon,1+\varepsilon)$ we solve the problem
\begin{align}
\begin{split}\label{eq:highregaux1}
c^{-2}v_{tt} -\Delta v- \beta \Delta v_t &= 0,\quad\text{in }(0,T)\times\Omega,\\
     \partial_\nu v+ \beta \partial_\nu v_t&= g(\lambda),\quad\text{in }(0,T)\times\partial\Omega,\\
      (v(0),v_t(0)) &= (u_0,\lambda u_1) ,\quad\text{in}\ \Omega,
     \end{split}
\end{align}
where $g(\lambda)=0$ if $p<3$ and
$$g(\lambda)=e^{\Delta_{\partial\Omega}t}[\partial_\nu u_0+\lambda\beta\partial_\nu u_1],$$
if $p>3$. By Theorem \ref{thm:linwestervelt1} this yields a unique solution $v(\lambda)\in \mathbb{E}_1(J_\varepsilon)$. We note on the go that the mapping $[\lambda\mapsto v(\lambda)]$ from $(1-\varepsilon,1+\varepsilon)$ to $\mathbb{E}_1(J_\varepsilon)$ is $C^\infty$, since the parameter $\lambda$ appears only polynomially in the linear problem \eqref{eq:highregaux1}.

Choose $\varepsilon>0$ and $\|(u_0,u_1)\|_{X_\gamma}$ sufficiently small such that
$$\sup_{\lambda\in (1-\varepsilon,1+\varepsilon)}\left(\max_{t\in J_\varepsilon}\|u_*(\lambda t)\|_\infty+\max_{t\in J_\varepsilon}\|[v(\lambda)](t)\|_\infty\right)<\frac{1}{4\gamma c^2}.$$
This is always possible by estimate \eqref{eq:MRest} and by the continuous dependence of $u_*$ on $(u_0,u_1)$ (which is uniform w.r.t.\ $T$). Note that $u_*=0$ if $(u_0,u_1)=0$, by uniqueness of the solution of \eqref{eq:westervelt}.

Let $_0\mathbb{W}(J_\varepsilon)$ be as in \eqref{eq:defbbW} with $J$ being replaced by $J_\varepsilon$ and define a mapping $H:(1-\varepsilon,1+\varepsilon)\times\, _0\mathbb{W}(J_\varepsilon)\to\mathbb{E}_0(J_\varepsilon)\times\, _0\mathbb{F}(J_\varepsilon)$ by
$$H(\lambda,u):=
\begin{bmatrix}
c^{-2}(u+v(\lambda))_{tt} -\lambda^2\Delta (u+v(\lambda))- \lambda\beta \Delta (u+v(\lambda))_t -\gamma [(u+v(\lambda))^2]_{tt}\\
 \lambda\partial_\nu (u+v(\lambda))+ \beta \partial_\nu (u+v(\lambda))_t+(u+v(\lambda))_t\sqrt{c^{-2}-2\gamma (u+v(\lambda))}.
\end{bmatrix}.
$$
Since $[\lambda\mapsto v(\lambda)]$ is $C^\infty$ it follows that $H\in C^\infty$ in $(\lambda,u)$ as well. Furthermore it holds that $H(1,u_*-v(1))=0$ and
$$D_uH(1,u_*-v(1))\hat{u}=
\begin{bmatrix}
c^{-2}\hat{u}_{tt} -\Delta \hat{u}- \beta \Delta \hat{u}_t-2\gamma(u_*\hat{u})_{tt}\\
\partial_\nu \hat{u}+ \beta \partial_\nu \hat{u}_t+\hat{u}_t\sqrt{c^{-2}-2\gamma u_*}-\gamma\frac{\hat{u}\partial_t u_*}{\sqrt{c^{-2}-2\gamma u_*}}
\end{bmatrix},
$$
by Proposition \ref{prop:regularity}. A Neumann series argument implies that
$$D_uH(1,u_*-v(1)):\, _0\mathbb{E}_1(J_\varepsilon)\to\mathbb{E}_0(J_\varepsilon)\times\, _0\mathbb{F}(J_\varepsilon)$$
is invertible, provided that the norm $\|u_*\|_{\mathbb{E}_1(J_\varepsilon)}$ is sufficiently small. This follows readily by decreasing $\|(u_0,u_1)\|_{X_\gamma}$ if necessary.

By the implicit function theorem there exists $r\in (0,\varepsilon)$ and a unique mapping $\phi\in C^\infty((1-r,1+r);\, _0\mathbb{W}(J_\varepsilon))$ such that $H(\lambda,\phi(\lambda))=0$ for all $\lambda\in (1-r,1+r)$ and $\phi(1)=u_*-v(1)$. From the uniqueness it follows that $u_\lambda=\phi(\lambda)+v(\lambda)$, hence
$$[\lambda\mapsto u_\lambda]\in C^\infty((1-r,1+r);\mathbb{E}_1(J_\varepsilon)).$$
Since $\partial_\lambda u_\lambda (t)=t\dot{u}_*(\lambda t)$ one computes inductively that
$$[t\mapsto t^k u_*^{(k)}(t)]\in \mathbb{E}_1(J)$$
for each $k\in\mathbb{N}_0$. Note that one may pass to the limit $\varepsilon\to 0$, since one evaluates the above derivatives at $\lambda=1$. In particular, this yields
$$u_*\in W_p^{k+2}(\tau,T;L_p(\Omega))\cap W_p^{k+1}(\tau,T;W_p^2(\Omega)),$$
for all $k\in\mathbb{N}$ and each $\tau\in (0,T)$. Moreover, by Sobolev embedding, it holds that
$$u_*\in C^\infty(0,T;W_p^2(\Omega)).$$
We have thus proven the following result.
\begin{The}\label{thm:HR}
Let the conditions of Theorem \ref{thm:mainthm} be satisfied. Then for each $T\in(0,\infty)$ there exists $\delta>0$ such that for all $u_0\in W_p^2(\Omega)$ and $u_1\in W_p^{2-2/p}(\Omega)$ satisfying
the estimate
$$\|u_0\|_{W_p^2(\Omega)}+\|u_1\|_{W_p^{2-2/p}(\Omega)}\le\delta,$$  and the compatibility condition
\begin{equation}
\partial_\nu (u_0+\beta u_1)+u_1\sqrt{c^{-2}-2\gamma u_0}= 0,\quad\text{on }\partial\Omega
\end{equation}
if $p>3$,
the unique solution
$$u\in W_p^2(J;L_p(\Omega))\cap W_p^1(J;W_p^2(\Omega))$$
of \eqref{eq:westervelt} satisfies
$$u\in W_p^{k+2}(\tau,T;L_p(\Omega))\cap W_p^{k+1}(\tau,T;W_p^2(\Omega)),$$
for all $k\in\mathbb{N}$ and each $\tau\in (0,T)$. In particular it holds that
$$u\in C^\infty(0,T;W_p^2(\Omega)).$$
\end{The}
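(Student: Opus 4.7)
The plan is to use Angenent's parameter trick. I introduce the rescaled family $u_\lambda(t) := u_*(\lambda t)$ for $\lambda$ in a small neighborhood of $1$ and $t$ in a slightly shortened interval $J_\varepsilon = [0, T/(1+\varepsilon)]$, and aim to show that $\lambda \mapsto u_\lambda$ is smooth from $(1-r,1+r)$ into $\mathbb{E}_1(J_\varepsilon)$. Since $\partial_\lambda u_\lambda(t) = t\,\dot u_*(\lambda t)$, induction will give $\partial_\lambda^k u_\lambda(t)|_{\lambda=1} = t^k u_*^{(k)}(t)$, so smoothness in $\lambda$ combined with the weight $t^k$ will deliver the desired time regularity on $[\tau, T]$ for any $\tau > 0$.

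First I rescale \eqref{eq:westervelt} to obtain the modified system \eqref{eq:westhighreg1}, in which $\lambda$ appears polynomially in the coefficients and in the initial data $(u_0, \lambda u_1)$. To remove the $\lambda$-dependence from the initial data and the compatibility condition, I split off a linear correction: let $v(\lambda) \in \mathbb{E}_1(J_\varepsilon)$ be the unique solution of the linear parabolic problem \eqref{eq:highregaux1}, whose solvability comes from Theorem \ref{thm:linwestervelt1}; since $\lambda$ enters only polynomially, $\lambda \mapsto v(\lambda)$ is $C^\infty$. Then I define a nonlinear operator $H(\lambda,u)$ on $(1-\varepsilon, 1+\varepsilon) \times \,_0\mathbb{W}(J_\varepsilon)$ with values in $\mathbb{E}_0(J_\varepsilon) \times \,_0\mathbb{F}(J_\varepsilon)$ by substituting $u + v(\lambda)$ into the rescaled PDE and boundary condition of \eqref{eq:westhighreg1}. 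Proposition \ref{prop:regularity} together with the polynomial dependence on $\lambda$ gives $H \in C^\infty$, and by construction $H(1, u_* - v(1)) = 0$.

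The main obstacle is to verify invertibility of $D_u H(1, u_* - v(1))\colon \,_0\mathbb{E}_1(J_\varepsilon) \to \mathbb{E}_0(J_\varepsilon) \times \,_0\mathbb{F}(J_\varepsilon)$. A direct computation shows that this derivative equals the invertible operator from Theorem \ref{thm:linwestervelt1} (for a suitable choice of $\alpha$) plus lower-order perturbations whose coefficients involve $u_*$, $\partial_t u_*$ and $\sqrt{c^{-2}-2\gamma u_*}$. By shrinking $\delta$ in Theorem \ref{thm:mainthm}, the norm $\|u_*\|_{\mathbb{E}_1(J_\varepsilon)}$ can be made arbitrarily small, and a Neumann series argument yields invertibility. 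The implicit function theorem then produces $\phi \in C^\infty((1-r,1+r); \,_0\mathbb{W}(J_\varepsilon))$ with $H(\lambda, \phi(\lambda)) = 0$ and $\phi(1) = u_* - v(1)$; by uniqueness of solutions to \eqref{eq:westhighreg1} (inherited from Theorem \ref{thm:mainthm}) we conclude $u_\lambda = \phi(\lambda) + v(\lambda)$, so $\lambda \mapsto u_\lambda \in C^\infty((1-r,1+r); \mathbb{E}_1(J_\varepsilon))$. Differentiating $k$-times at $\lambda=1$ and using induction shows $t \mapsto t^k u_*^{(k)}(t) \in \mathbb{E}_1(J_\varepsilon)$ for every $k$; since the derivatives are evaluated at $\lambda=1$ one may let $\varepsilon \to 0$, and invoking Sobolev embedding produces the claimed regularity on $[\tau, T]$ for every $\tau \in (0,T)$, hence in particular $u_* \in C^\infty(0,T; W_p^2(\Omega))$.
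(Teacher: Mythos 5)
Your proposal is correct and follows essentially the same route as the paper's proof: the rescaling $u_\lambda(t)=u_*(\lambda t)$ on $J_\varepsilon$, the auxiliary linear solution $v(\lambda)$ from Theorem \ref{thm:linwestervelt1} to absorb the $\lambda$-dependent data and compatibility condition, smoothness of $H$ via Proposition \ref{prop:regularity}, invertibility of $D_uH(1,u_*-v(1))$ by a Neumann series after shrinking $\delta$, the implicit function theorem plus uniqueness to identify $u_\lambda=\phi(\lambda)+v(\lambda)$, and finally differentiation at $\lambda=1$ with $\varepsilon\to 0$ and Sobolev embedding. No gaps worth noting.
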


\section{Long-Time Behaviour}

\noindent
In this section we assume that $p>d+1$. Note that as long as $|u(t,x)|<\frac{1}{2\gamma c^2}$ for all $(t,x)\in (0,T)\times\Omega$ we can rewrite \eqref{eq:westervelt} as the first order system
\begin{equation}\label{eq:1storder1}
\partial_t w={A}(w)w+F(w),
\end{equation}
subject to the nonlinear boundary condition
\begin{equation}\label{eq:1storder2}
{B}(w)=0,
\end{equation}
where $w=(u,v)=(u,u_t)$,
$${A}(w):=
\begin{bmatrix}
0 & I\\ \frac{1}{c^{-2}-2\gamma u}\Delta  & \frac{\beta}{c^{-2}-2\gamma u}\Delta
\end{bmatrix},\quad F(w)=
\begin{bmatrix}
0 \\ \frac{2\gamma v^2}{c^{-2}-2\gamma u}
\end{bmatrix},$$
and
$${B}(w):=\partial_\nu u+\beta\partial_\nu v+v\sqrt{c^{-2}-2\gamma u}.$$
As in Section \ref{sec:NWP} one can show that the mapping $[w\mapsto (A(w),F(w),B(w))]$ is smooth, as long as the first component $u$ of $w$ is bounded away from the critical value $\frac{1}{2\gamma c^2}$ and provided $p>d+1$.

Note that the set of equilibria $\mathcal{E}$ of this first order system (or equivalently \eqref{eq:westervelt}) is given by
$$\mathcal{E}=\left\{(r,0):r\in\mathbb{R}\ \text{and}\ |r|<\frac{1}{2\gamma c^2}\right\}.$$
To study the stability properties of such an equilibrium, we consider the full linearization of \eqref{eq:1storder1} and \eqref{eq:1storder2} in $(r,0)\in\mathcal{E}$. This yields a linear operator $A_0$ defined by
$$A_0w=A_0(u,v)=
\begin{bmatrix}
v \\ \frac{1}{c^{-2}-2\gamma r}\Delta u+\frac{\beta}{c^{-2}-2\gamma r}\Delta v
\end{bmatrix}
$$
in the Banach space $X_0:=W_p^2(\Omega)\times L_p(\Omega)$, equipped with the domain $X_1:=D(A_0)$ given by
$$
X_1=\left\{w=(u,v)\in W_p^2(\Omega)\times W_p^2(\Omega):\partial_\nu u+\beta\partial_\nu v+v\sqrt{c^{-2}-2\gamma r}=0\ \text{on}\ \partial\Omega\right\}.
$$
By Theorem \ref{thm:linwestervelt1} the operator $A_0$ has the property of maximal $L_p$-regularity on each bounded interval $[0,T]$. Therefore $A_0$ is the generator of an analytic $C_0$-semigroup in $X_0$, see e.g.\ \cite[Proposition 1.2]{Pru02b} or \cite[Theorem 2.2]{Dor93}.

In what follows, we will investigate the spectrum $\sigma(A_0)$ of $A_0$. Note that $D(A_0)$ is not compactly embedded into $X_0$, hence we cannot work with a compact resolvent of $A_0$. In a first step we show that the inclusion
$$\sigma_{app}(A_0)\subset\mathbb{C}_-\cup\{0\},$$
holds for the approximate point spectrum $\sigma_{app}(A_0)$ of $A_0$. Clearly, $\lambda=0$ is an eigenvalue of $A_0$ with the corresponding eigenspace
$$\{(u,v)\in X_1:u\ \text{is constant}\ \text{and}\ v=0\}.$$
Let $0\neq\lambda\in\sigma_{app}(A_0)$. Then we find a sequence $(w_n)_n=(u_n,v_n)_n\subset X_1$ with $\|(u_n,v_n)\|_{X_0}=1$ such that
$$\lambda w_n-A_0w_n\to 0$$
in $X_0$ as $n\to\infty$, see e.g.\ \cite[Lemma IV.1.9]{EN00}. Setting $c_r:=\sqrt{c^{-2}-2\gamma r}>0$,
\begin{equation}\label{eq:appspec1}
\lambda u_n-v_n=:g_n
\end{equation}
and
\begin{equation}\label{eq:appspec2}
\lambda v_n-c_r^{-2}(\Delta u_n+\beta\Delta v_n)=:h_n
\end{equation}
this yields $g_n\to 0$ in $W_p^2(\Omega)$ and $h_n\to 0$ in $L_p(\Omega)$. We test the second equation by $\overline{v_n}$ and integrate by parts to the result
$$\lambda \|v_n\|_{L_2(\Omega)}^2+c_r^{-2}\beta\|\nabla v_n\|_{L_2(\Omega)^d}^2
+c_r^{-1}\|v_n\|_{L_2(\partial\Omega)}^2+c_r^{-2}(\nabla u_n|\nabla v_n)_{L_2(\Omega)}=(h_n|v_n)_{L_2(\Omega)}.$$
Since $u_n=\frac{1}{\lambda}(v_n+g_n)$ (by \eqref{eq:appspec1}) we obtain (after taking real parts)
\begin{multline*}
\operatorname{Re}\lambda \|v_n\|_{L_2(\Omega)}^2+c_r^{-2}\left(\frac{\operatorname{Re}\lambda}{|\lambda|^2}+\beta\right)\|\nabla v_n\|_{L_2(\Omega)^d}^2
+c_r^{-1}\|v_n\|_{L_2(\partial\Omega)}^2=\\
=\operatorname{Re}(h_n|v_n)_{L_2(\Omega)}-c_r^{-2}
\operatorname{Re}\left[\frac{\bar{\lambda}}{|\lambda|^2}(\nabla g_n|\nabla v_n)_{L_2(\Omega)}\right].
\end{multline*}
Applying the inequalities of Cauchy-Schwarz and Young to both terms on the right hand side yields
$$\operatorname{Re}(h_n|v_n)_{L_2(\Omega)}\le\varepsilon\|v_n\|_{L_2(\Omega)}^2+
C(\varepsilon)\|h_n\|_{L_2(\Omega)}^2$$
and
$$\operatorname{Re}\left[\frac{\bar{\lambda}}{|\lambda|^2}(\nabla g_n|\nabla v_n)_{L_2(\Omega)}\right]\le\frac{1}{|\lambda|}\left(\varepsilon\|\nabla v_n\|_{L_2(\Omega)^d}^2+
C(\varepsilon)\|\nabla g_n\|_{L_2(\Omega)^d}^2\right),$$
for an arbitrarily small $\varepsilon>0$ and some constant $C(\varepsilon)>0$.
Assume that $\operatorname{Re}\lambda\ge 0$ and $\lambda\neq 0$. Choosing $\varepsilon>0$ small enough and making use of the Poincar\'{e}-type inequality
$$\|v_n\|_{L_2(\Omega)}^2\le C\left(\|\nabla v_n\|_{L_2(\Omega)^d}^2+\|v_n\|_{L_2(\partial\Omega)}^2\right)$$
for some constant $C>0$ (being independent of $n$), we obtain an estimate of the form
$$\|v_n\|_{W_2^1(\Omega)}^2\le C(\lambda,d,r,\beta)\left(\|h_n\|_{L_2(\Omega)}^2+\|\nabla g_n\|_{L_2(\Omega)^d}^2\right).$$
Since $p> d+1\ge 2$ we may pass to the limit $n\to\infty$ which yields $\|v_n\|_{W_2^1(\Omega)}\to 0$, hence, by Sobolev embeddings, $\|v_n\|_{L_{q_0}(\Omega)}\to 0$ as $n\to\infty$, where $q_0=\frac{2d}{d-2}$ if $d\ge 3$ and $q_0=p$ if $d\le 2$. If $d\ge 3$, we distinguish two cases:
\begin{enumerate}
\item $q_0\ge p$: Then $v_n\to 0$ in $L_p(\Omega)$ and $\|v_n\|_{W_p^2(\Omega)}\le M$ for all $n\in\mathbb{N}$ and some constant $M>0$, by \eqref{eq:appspec1} and the assumption $\|(u_n,v_n)\|_{X_0}=1$. Interpolation theory yields $v_n\to 0$ in $W_p^{2s}(\Omega)$ for any $s\in (0,1)$, hence also $v_n|_{\partial\Omega}\to 0$ in $W_p^{1-1/p}(\partial\Omega)$, provided $2s\ge 1$. From now on we fix such an $s\in [1/2,1)$. Replacing $u_n$ in \eqref{eq:appspec2} and in the boundary condition
$$\partial_\nu u_n+\beta\partial_\nu v_n+c_rv_n=0$$
by \eqref{eq:appspec1}, we obtain the following linear elliptic problem for $v_n$:
\begin{align}
\begin{split}
\omega v_n-\Delta v_n&=\frac{\lambda}{1+\beta\lambda}\left(c_r^2 h_n+\frac{1}{\lambda}\Delta g_n-c_r^2\lambda v_n\right)+\omega v_n,\ x\in\Omega,\\
\partial_\nu v_n&=-\frac{\lambda}{1+\beta\lambda}\left(c_r v_n+\frac{1}{\lambda}\partial_\nu g_n\right),\ x\in\partial\Omega.
\end{split}
\end{align}
Here the number $\omega>0$ is arbitrary but fixed. Elliptic regularity theory for this inhomogeneous Neumann boundary value problem implies that $\|v_n\|_{W_p^2(\Omega)}\to 0$ as $n\to \infty$. Together with \eqref{eq:appspec1} this yields $u_n\to 0$ in $W_p^2(\Omega)$, which contradicts the fact $\|(u_n,v_n)\|_{X_0}=1$.
\item $q_0<p$: In this case we obtain from \eqref{eq:appspec1}, from the assumption $\|(u_n,v_n)\|_{X_0}=1$ and interpolation, that $v_n\to 0$ in $W_{q_0}^{2s}(\Omega)$ for any $s\in (0,1)$.
If the Sobolev index of the space $W_{q_0}^{2s}(\Omega)$ satisfies $2s-d/q_0\ge 0$, then
$$W_{q_0}^{2s}(\Omega)\hookrightarrow L_p(\Omega).$$
Therefore $v_n\to 0$ in $L_p(\Omega)$, hence we may follow the lines of case 1 to obtain a contradiction. If on the contrary $2s-d/q_0<0$, then we use the embedding
$$W_{q_0}^{2s}(\Omega)\hookrightarrow L_{q_1}(\Omega),$$
where
$$\frac{1}{q_1}=\frac{1}{q_0}-\frac{2s}{d}\in (0,1).$$
This yields $v_n\to 0$ in $L_{q_1}(\Omega)$ as $n\to \infty$. If $q_1$ can be chosen greater or equal to $p$, then we may follow the lines of case 1 above to obtain a contradiction. In case that $q_1< p$, we obtain (by Sobolev embedding and interpolation) that $v_n\to 0$ in $W_{q_1}^{2s}(\Omega)$ for each $s\in (0,1)$. In case $2s-d/q_1\ge 0$ we obtain as above $v_n\to 0$ in $L_p(\Omega)$, while for $2s-d/q_1<0$ we define
$$\frac{1}{q_2}=\frac{1}{q_1}-\frac{2s}{d}=\frac{1}{q_0}-2\frac{2s}{d}.$$
We may now iterate this procedure. Assume that for each $k\in\mathbb{N}$ it holds that $q_k<p$ and
$$\frac{1}{q_k}=\frac{1}{q_{k-1}}-\frac{2s}{d}\in (0,1).$$
This implies
$$\frac{1}{q_k}=\frac{1}{q_0}-k\frac{2s}{d},$$
hence $1/q_k<0$, if $k>\frac{d}{2sq_0}$, a contradiction. Therefore, there exists $k\in \mathbb{N}$ such that $q_k\ge p$ or $2s-d/q_{k-1}\ge 0$, which allows us to follow the lines of case 1.
\end{enumerate}
We have shown that if $\lambda\in\sigma_{app}(A_0)$, then $\lambda=0$ or $\operatorname{Re}\lambda<0$. Now it is well-known that for the topological boundary $\partial\sigma(A_0)$ of the spectrum of $A_0$ it holds that
$$\partial\sigma(A_0)\subset\sigma_{app}(A_0),$$
see e.g.\ \cite[Proposition IV.1.10]{EN00}. Assume that there exists $\lambda\in\sigma(A_0)$ with $\operatorname{Re}\lambda>0$. Then it follows that $\partial\sigma(A_0)\cap\mathbb{C}_+\neq\emptyset$, since $A_0$ generates an analytic $C_0$-semigroup in $X_0$. But this is impossible, since $\sigma_{app}(A_0)\subset\mathbb{C}_-\cup\{0\}$ and therefore it holds that
$$\sigma(A_0)\subset\overline{\mathbb{C}_-}.$$
Suppose that there exists $\lambda\in\sigma(A_0)$ such that $\operatorname{Re}\lambda=0$ and $\lambda\neq 0$. Then $\lambda\in\partial\sigma(A_0)\subset\sigma_{app}(A_0)$ which is a contradiction. This shows that
$$\sigma(A_0)\subset\mathbb{C}_-\cup\{0\}.$$
We claim that $\lambda=0\in\sigma(A_0)$ is semi-simple, i.e.\ $R(A_0)$ is closed in $X_0$ and
$$X_0=N(A_0)\oplus R(A_0).$$
Let $f=(g,h)\in R(A_0)\subset X_0$. Then there exists $w=(u,v)\in D(A_0)$ such that $A_0 w=f$ or equivalently $v=g$, $\Delta u+\beta\Delta v=c_r^2h$ in $\Omega$ and $\partial_\nu(u+\beta v)+c_rv=0$ on $\partial\Omega$; recall that $c_r=\sqrt{c^{-2}-2\gamma r}>0$. Integrating the second equation w.r.t\ $x\in\Omega$ yields
$$c_r^2\int_\Omega h dx=-c_r\int_{\partial\Omega} vd\sigma=-c_r\int_{\partial\Omega}g d\sigma,$$
($d\sigma$ denoting the surface measure on $\partial\Omega$), hence
$$c_r\int_\Omega h dx+\int_{\partial\Omega}g d\sigma=0.$$
Now we assume that
$$f=(g,h)\in\left\{(g,h)\in X_0:c_r\int_\Omega h dx+\int_{\partial\Omega}g d\sigma=0\right\}$$
is given. Define $v:= g\in W_p^2(\Omega)$ and consider the elliptic problem
$$\begin{cases}
\Delta u=c_r^2h-\beta\Delta g,&\ x\in\Omega,\\
\partial_\nu u=-(\beta \partial_\nu g+c_r g),&\ x\in\partial\Omega.
\end{cases}$$
Since
$$\int_\Omega(c_r^2h-\beta\Delta g)dx=-\int_{\partial\Omega} (\beta \partial_\nu g+c_r g)d\sigma$$
it is well known that there exists a solution $u\in W_p^2(\Omega)$ of this elliptic problem (being unique up to an additive constant). It follows that $w:=(u,v)\in D(A_0)$ and $A_0 w=f$, hence
$$R(A_0)=\left\{(g,h)\in X_0:c_r\int_\Omega h dx+\int_{\partial\Omega}g d\sigma=0\right\}.$$
This in turn implies that $R(A_0)$ is closed in $X_0$. Let $f=(g,h)\in X_0$ be given. Then we may write $(g,h)=(k,0)+(g-k,h)$, where
\begin{equation}\label{eq:defofk}
k:=\frac{1}{|\partial\Omega|}\left(c_r\int_\Omega h dx+\int_{\partial\Omega}g d\sigma\right).
\end{equation}
With this choice it follows that $(g-k,h)\in R(A_0)$ and (of course) $(k,0)\in N(A_0)=\operatorname{span}\{1\}\times\{0\}$. Define a mapping $P:X_0\to X_0$ by $P(g,h):=(k,0)$ where $k$ is given by \eqref{eq:defofk}. It is easily seen that $P$ is a continuous projection with $N(P)=R(A_0)$ and $R(P)=N(A_0)$. Therefore it holds that $X_0=N(A_0)\oplus R(A_0)$, hence $\lambda=0$ is semi-simple.

We are now in a position to follow the lines of the proof of \cite[Theorem 3.1]{PSZ09} to obtain the following result on the qualitative behaviour of the solution of \eqref{eq:westervelt} in a neighbourhood of an equilibrium.
\begin{The}\label{thm:Stab}
Let the conditions of Theorem \ref{thm:mainthm} be satisfied and let $(r_*,0)\in \mathcal{E}$ be an equilibrium.

Then $(r_*,0)$ is stable in $X_\gamma=W_p^2(\Omega)\times W_p^{2-2/p}(\Omega)$ and there exists $\delta>0$ such that the solution $u(t)$ of \eqref{eq:westervelt} with initial value $(u_0,u_1)\in X_\gamma$, satisfying
$$\|u_0-r_*\|_{W_p^2(\Omega)}+\|u_1\|_{W_p^{2-2/p}(\Omega)}\le\delta$$
and the compatibility condition
$$\partial_\nu (u_0+\beta u_1)+u_1\sqrt{c^{-2}-2\gamma u_0}= 0,\quad\text{on }\partial\Omega\quad (\text{if}\ p>3),$$
exists on $\mathbb{R}_+$ and $(u(t),u_t(t))$ converges exponentially fast in $X_\gamma$ to some $(r_\infty,0)\in\mathcal{E}$ as $t\to\infty$.
\end{The}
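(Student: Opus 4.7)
The plan is to deploy the generalized principle of linearized stability for normally stable equilibria, following \cite{PSZ09}, and the authors have already supplied essentially every ingredient it requires: smoothness of $A(\cdot)$, $F(\cdot)$ and $B(\cdot)$ as long as $|u|$ stays away from $1/(2\gamma c^2)$; generation of an analytic semigroup on $X_0$ by the full linearization $A_0$ at $(r_*,0)$ together with the maximal $L_p$-regularity of $A_0$ on every bounded interval; the spectral inclusion $\sigma(A_0)\setminus\{0\}\subset\mathbb{C}_-$; and the fact that $\lambda=0$ is a semi-simple eigenvalue with explicit one-dimensional kernel $N(A_0)=\operatorname{span}\{(1,0)\}$ and explicit complementing projection $P$ given by \eqref{eq:defofk}. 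The one extra structural remark needed is that $\mathcal{E}$ is a $C^\infty$-submanifold of $X_\gamma$ of dimension $1$ whose tangent space at $(r_*,0)$ coincides with $N(A_0)$; this is the \emph{normally stable} condition, and it is obvious from the definition of $\mathcal{E}$.

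I would begin the proof by shifting variables, setting $\bar w := w-(r_*,0)$, so that \eqref{eq:1storder1}--\eqref{eq:1storder2} take the abstract semilinear form
\begin{equation*}
\partial_t\bar w = A_0\bar w + G(\bar w),\qquad B_0\bar w = N(\bar w),
\end{equation*}
where $B_0$ is the linearization of $B$ at $(r_*,0)$ (already built into the domain of $A_0$) and where $G$ and $N$ vanish to second order at the origin; the smoothness and structure of these nonlinearities are controlled by Proposition \ref{prop:regularity}. Using the splitting $X_0=N(A_0)\oplus R(A_0)$ and the projection $P$, I would then decompose $\bar w(t)=\rho(t)(1,0)+y(t)$ with $\rho(t)\in\mathbb{R}$ and $y(t)\in R(A_0)$, which yields the coupled system
\begin{equation*}
\dot\rho = P\,[\,G(\bar w)\,\text{terms}\,],\qquad \partial_t y = A_1 y + (I-P)\,[\,\ldots\,],
\end{equation*}
where $A_1 := A_0|_{R(A_0)}$ has $\sigma(A_1)\subset\mathbb{C}_-$ with strict spectral gap (the only point of $\sigma(A_0)$ on $i\mathbb{R}$ has been projected out). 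Consequently $A_1$ has exponential type and, by Theorem \ref{thm:linwestervelt1} combined with this spectral gap, enjoys maximal $L_p$-regularity on the half-line in an exponentially weighted norm $e^{\omega t}\mathbb{E}_1(\mathbb{R}_+)$ for some $\omega>0$.

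The core of the argument is then a contraction-mapping fixed point in the weighted space that simultaneously produces a global solution $y\in e^{\omega t}\mathbb{E}_1(\mathbb{R}_+)$ and a scalar trajectory $\rho\in W^1_p(\mathbb{R}_+)$ converging to some $\rho_\infty$, provided the initial datum $(u_0-r_*,u_1)$ is sufficiently small in $X_\gamma$ and satisfies the compatibility condition. Because $G$ and $N$ are at least quadratic, the right-hand side of the $\rho$-equation is bounded by $C(|\rho-\rho_*|+\|y\|)^2$, so exponential decay of $y$ propagates through a Gronwall-type estimate to exponential convergence of $\rho$; setting $r_\infty:=r_*+\rho_\infty$ then gives $(u(t),u_t(t))\to(r_\infty,0)$ in $X_\gamma$ at an exponential rate, and the uniform-in-time estimates built into the fixed point yield Lyapunov stability of $(r_*,0)$ as a byproduct.

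The main technical obstacle is the handling of the nonlinear boundary condition, whose linearization at $(r_*,0)$ involves the $r$-dependent factor $c_{r_*}=\sqrt{c^{-2}-2\gamma r_*}$ and whose quadratic remainder must be estimated in the trace space $_0\mathbb{F}(\mathbb{R}_+)$ with exponential weight. Here the algebra/embedding estimates of Proposition \ref{prop:regularity}, extended to the half-line and to weighted norms by standard substitution $v(t)=e^{\omega t}u(t)$, together with the fact that the weight does not affect the trace-space structure for small $\omega$, are the workhorses. Once these nonlinear mapping and Lipschitz estimates are secured, the remaining mechanics reproduce verbatim \cite[Theorem 3.1]{PSZ09}, and the conclusions of Theorem \ref{thm:Stab} follow.
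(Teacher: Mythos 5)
Your proposal follows essentially the same route as the paper: the entire section preceding the theorem is devoted to verifying exactly the normally stable hypotheses you list (maximal $L_p$-regularity and analytic semigroup generation by $A_0$, $\sigma(A_0)\setminus\{0\}\subset\mathbb{C}_-$, semi-simplicity of $0$ with $N(A_0)=\operatorname{span}\{(1,0)\}\times\{0\}$ equal to the tangent space of $\mathcal{E}$), after which the paper simply says the conclusion follows by repeating the proof of \cite[Theorem 3.1]{PSZ09}. Your sketch of that argument (splitting along $N(A_0)\oplus R(A_0)$, spectral gap for $A_0|_{R(A_0)}$, weighted maximal regularity and a fixed-point/Gronwall scheme handling the quadratic interior and boundary nonlinearities) is a faithful outline of what "following the lines of \cite{PSZ09}" entails, so the two proofs coincide in substance.
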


\bibliographystyle{abbrv}
\bibliography{MeWi11_Lit}

\end{document}